\newtheorem{theorem}{Theorem}
\newtheorem{lemma}{Lemma}
\newtheorem{remark}{Remark}
\newtheorem{corollary}{Corollary}
\newcommand{\differential}{{\rm{d}}}
\newcommand{\hess}{{\rm{Hess}}}
\newcommand{\RNum}[1]{\uppercase\expandafter{\romannumeral #1\relax}}
\def\BibTeX{{\rm B\kern-.05em{\sc i\kern-.025em b}\kern-.08em
    T\kern-.1667em\lower.7ex\hbox{E}\kern-.125emX}}
\begin{document}
\bstctlcite{IEEE_b:BSTcontrol}

\title{On the Hopf-Cole Transform for Control-affine Schr\"{o}dinger Bridge}

\author{Alexis M.H. Teter and Abhishek Halder, \IEEEmembership{Senior Member, IEEE}
\thanks{Alexis M.H. Teter is with the Department of Applied Mathematics, University of California Santa Cruz, Santa Cruz, CA 95064, USA {\tt\small{amteter@ucsc.edu}}}
\thanks{Abhishek Halder is with the Department of Aerospace Engineering, Iowa State University, Ames, IA 50011, USA, {\tt\small{ahalder@iastate.edu}}.%
}}

\maketitle

\begin{abstract}
The purpose of this note is to clarify the importance of the relation $\bm{gg}^{\top}\propto\bm{\sigma\sigma}^{\top}$ in solving control-affine Schr\"{o}dinger bridge problems via the Hopf-Cole transform, where $\bm{g},\bm{\sigma}$ are the control and noise coefficients, respectively. We show that the Hopf-Cole transform applied to the conditions of optimality for generic control-affine Schr\"{o}dinger bridge problems, i.e., without the assumption $\bm{gg}^{\top}\propto\bm{\sigma\sigma}^{\top}$, gives a pair of forward-backward PDEs that are neither linear nor equation-level decoupled. We explain how the resulting PDEs can be interpreted as nonlinear forward-backward advection-diffusion-reaction equations, where the nonlinearity stem from additional drift and reaction terms involving the gradient of the log-likelihood a.k.a. the score. These additional drift and reaction vanish when  $\bm{gg}^{\top}\propto\bm{\sigma\sigma}^{\top}$, and the resulting boundary-coupled system of linear PDEs can then be solved by dynamic Sinkhorn recursions. A key takeaway of our work is that the numerical solution of the generic control-affine Schr\"{o}dinger bridge requires further algorithmic development, possibly generalizing the dynamic Sinkhorn recursion or otherwise.
\end{abstract}


\section{Introduction}\label{sec:introduction}
\subsubsection*{Schr\"{o}dinger bridge} \emph{Schr\"{o}dinger bridge} is a class of stochastic optimal control problems where the control objective is to steer the state statistics from a given probability density function (PDF) to another over a fixed deadline while minimizing the total control effort. Genesis of this idea occurred about a century ago in the works of Erwin Schr\"{o}dinger \cite{Sch31,Sch32} for controlled Brownian motion. For an English translation of Schr\"{o}dinger's 1931 paper, see \cite{chetrite2021schrodinger}.

Schr\"{o}dinger's motivation in \cite{Sch31,Sch32} was to seek a diffusion process interpretation of quantum mechanics, and these works were not formulated as control problems. This is unsurprising because neither the axiomatic theory of stochastic processes\footnote{Kolmogorov's celebrated Grundbegriffe \cite{kolmogoroff1933grundbegriffe} appeared in 1933.} nor the discipline of feedback control\footnote{``Control emerged as a discipline after the Second World War" \cite{aastrom2014control}.} existed in 1931-32. The understanding that the Schr\"{o}dinger bridges are stochastic optimal controlled processes emerged in the late 20th century \cite{follmer1988random,dai1991stochastic,pavon1991free,blaquiere1992controllability,chen2016relation,chen2021stochastic}. Since then, natural control-theoretic generalizations of the classical Schr\"{o}dinger bridge (i.e., controlled Brownian motion) have appeared. These include additive state-cost-to-go \cite{dawson1990schrodinger,wakolbinger1990schrodinger,leonard2011stochastic,teter2024schr,teter2024weyl,teter2025probabilistic}, nonlinear controlled stochastic differential equations \cite{chen2015fast,haddad2020prediction,caluya2020finite,caluya2021wasserstein,nodozi2022schrodinger,behjoo2025harmonic}, additional path constraints \cite{caluya2021reflected}, etc.

\subsubsection*{Hopf-Cole transform} A popular approach to numerically solve the (classical and generalized) Schr\"{o}dinger bridge problems is to use the \emph{Hopf-Cole transform} \cite{hopf1950partial,cole1951quasi} originally developed for problems in fluid dynamics. In the Schr\"{o}dinger bridge context, this transform is a bijective change-of-variable of the form:\\
\noindent (optimal joint PDF, value function) $\mapsto$ Schr\"{o}dinger factors,
where the Schr\"{o}dinger factors are a suitable pair of measurable functions of state and time. 

The pair (optimal joint PDF, value function) is the primal-dual pair for the Schr\"{o}dinger bridge problem. They satisfy a primal (controlled forward Kolmogorov) PDE and a dual (Hamilton-Jacobi-Bellman-like) PDE, respectively, that are coupled, nonlinear, and are subject to two boundary conditions. It turns out that this coupled PDE system is problematic for computation since both boundary conditions are available in the primal variable, and none in the dual variable. 

When applicable, the main appeal of the Hopf-Cole transform is that in the new variable pair (Schr\"{o}dinger factors), the PDEs become \emph{equation-level decoupled and linear}. After the transform, the coupling gets pushed to the boundary conditions. The resulting system of boundary-coupled linear PDEs can then be solved by dynamic Sinkhorn recursions \cite{marino2020optimal} that are contractive with guaranteed worst-case linear rate of convergence \cite{chen2016entropic,teter2023contraction}.

Recently, it has been pointed out \cite{nodozi2023neural} that the Hopf-Cole transform is not applicable for \emph{control non-affine} Schr\"{o}dinger bridge problems where both the drift and diffusion coefficients are non-affine in control. Then, the necessary conditions of optimality yield a system of $m+2$ coupled nonlinear PDEs (as opposed to 2 primal-dual PDEs) where $m$ is the number of control inputs. The solution of the coupled PDE system in \cite{nodozi2023neural} was approximated by training a custom neural network. Motivation for such control non-affine Schr\"{o}dinger bridge came from material synthesis using controlled colloidal self-assembly \cite{nodozi2023physics}.

\subsubsection*{Contributions} We explain why the Hopf-Cole transform does not reap computational benefit even for \emph{control-affine} Schr\"{o}dinger bridge problems unless the input and the noise channels coincide. When these channels are non-identical, we prove that the Schr\"{o}dinger factor PDEs obtained from the Hopf-Cole transform, remain coupled and nonlinear due to \emph{extra drift and reaction terms that are nonlinear in a Schr\"{o}dinger factor}. These extra terms are shown to vanish precisely when the input and noise channels are the same, thereby fundamentally changing the structure of the PDE system (from coupled nonlinear to decoupled linear). 

To the best of our knowledge, the PDEs and structural results we derive, are new. They have likely escaped attention so far because existing works \cite{chen2021stochastic,caluya2021wasserstein} made the identical channel assumption \emph{a priori}. These assumptions, though mathematically rather restrictive, were nonetheless motivated by practical reasons (identical input and noise channels indeed occur for stochastic forcing and noisy actuators) or ease of calculations. Removing these assumptions, we uncover a generalized PDE system.

The main takeaway of our results is that new computational algorithms--beyond standard Sinkhorn recursions--are needed to solve generic control-affine Schr\"{o}dinger bridge problems. This will be pursued in our future work.

\subsubsection*{Organization} The problem statement for the control-affine Schr\"{o}dinger bridge appears in Sec. \ref{subsec:phiphihatPDEs} (equation \eqref{CASBP}). Sec. \ref{sec:ConditionsForOptimality} provides the necessary conditions of optimality (Theorem \ref{Thm:1stOrderConditionsOfOptimality}) as a system of primal-dual PDEs. The Schr\"{o}dinger factor PDEs for generic control-affine case are derived in Sec. \ref{subsec:phiphihatPDEs} (Theorem \ref{Thm:ForwardBackward}) with associated interpretations in Sec. \ref{subsec:Interpretations} and implications in Sec. \ref{subsec:Implications}. Sec. \ref{sec:conclusions} concludes the paper.


\section{Control-affine Schr\"{o}dinger Bridge Problem}\label{sec:CASBP}
Consider an It\^{o} diffusion process governing the controlled state vector $\bm{x}_{t}^{\bm{u}}\in\mathbb{R}^{n}$ over a prescribed finite time horizon $[t_0,t_1]$, of the form 
\begin{align}
\differential\bm{x}_{t}^{\bm{u}} = \left(\bm{f}\left(t,\bm{x}_{t}^{\bm{u}}\right) + \bm{g}\left(t,\bm{x}_{t}^{\bm{u}}\right)\bm{u}\right)\differential t + \bm{\sigma}(t,\bm{x}_{t}^{\bm{u}})\differential\bm{w}_t,
\label{ControlledSDE}    
\end{align}
where $\bm{w}_t\in\mathbb{R}^{p}$ is standard Brownian motion, and the control $\bm{u}\in\mathcal{U}:=\{\bm{v}:[t_0,t_1]\times\mathbb{R}^{n}\mapsto\mathbb{R}^{m} \mid \|\bm{v}\|_2^2 < \infty\}$, the set of Markovian finite energy inputs. Notice that the controlled drift $\bm{f} + \bm{gu}$ is affine in control.

Let the diffusion tensor 
\begin{align}
\bm{\Sigma}:=\bm{\sigma}\bm{\sigma}^{\top}\succeq\bm{0}.
\label{defDiffusionTensor}    
\end{align}
We make the following standard regularity assumptions on the unforced drift coefficient $\bm{f}$, and the diffusion coefficient $\bm{\sigma}$. Here and throughout, the notation $\langle\cdot,\cdot\rangle$ stands for the standard Euclidean inner product for vector arguments, and the Frobenius a.k.a. the Hilbert-Schmidt inner product for matrix arguments. 
\begin{itemize}
\item[\textbf{A1.}] (\textbf{Non-explosion and Lipschitz coefficients}) There exist constants $c_1,c_2>0$ such that $\forall\bm{x},\bm{y}\in\mathbb{R}^n$, $\forall t\in[t_0,t_1]$, 
\begin{align*}
\|\bm{f}(t,\bm{x})\|_2 + \|\bm{\sigma}\left(t,\bm{x}\right)\|_{2} 
&\leq 
c_1\left(1 + \|\bm{x}\|_2\right), \\
\|\bm{f}(t,\bm{x}) - \bm{f}(t,\bm{y})\|_2 
&\leq 
c_2 \|\bm{x}-\bm{y}\|_2.    
\end{align*}
\item[\textbf{A2.}] (\textbf{Uniformly lower bounded diffusion tensor}) There exists a constant $c_3>0$ such that $\forall \bm{x}\in\mathbb{R}^{n}$, $\forall t\in[t_0,t_1]$,
\begin{align*}
    \langle \bm{x}, \bm{\Sigma}(t,\bm{x})\bm{x}\rangle 
    \geq 
    c_3 \|\bm{x}\|_2^2.
\end{align*}  
\end{itemize}
The assumptions \textbf{A1}-\textbf{A2} ensure the strong solution \cite[p. 66]{oksendal2013stochastic} for the uncontrolled ($\bm{u}\equiv\bm{0}$) version of \eqref{ControlledSDE}, and also that the corresponding transition density is everywhere continuous and positive \cite[p. 364]{karatzas2014brownian}.

Our interest is to solve a control-affine Schr\"{o}dinger Bridge problem: a stochastic optimal control problem to minimize an expected cost-to-go while transferring the controlled state between given endpoint joint state PDFs $\rho_0,\rho_1$ subject to the controlled dynamics \eqref{ControlledSDE} and hard deadline constraints. For $t\in[t_0,t_1]$, denote the space of continuous (w.r.t. $t$) PDF-valued curves with fixed endpoints $\rho_0,\rho_1$, as
\begin{align}
\mathcal{P}_{01} := &\{t\mapsto \rho(t,\cdot)\,\text{continuous} \mid \rho \geq 0, \!\!{\small{\int}}\!\!\rho(t,\cdot) = 1\forall t\in\![t_0,t_1],\nonumber \\ &\rho(t_0,\cdot) = \rho_0(\cdot), \rho(t_1,\cdot) = \rho_1(\cdot)\}.
\label{defP01}    
\end{align}
Then, the control-affine Schr\"{o}dinger Bridge problem is
\begin{subequations}
\begin{align}
&\underset{\left(\rho^{\bm{u}},\bm{u}\right)\in\mathcal{P}_{01}\times\mathcal{U}}{\arg\inf}\int_{t_0}^{t_1}\mathbb{E}_{\rho^{\bm{u}}}\left[q\left(t,\bm{x}_{t}^{\bm{u}}\right) + \frac{1}{2}\|\bm{u}\|_2^2\right]\differential t
\label{CASBPobjective}\\
&\text{subject to} \nonumber \\ &\quad\dfrac{\partial\rho^{\bm{u}}}{\partial t} + \nabla_{\bm{x}_t^{\bm{u}}}\cdot\left(\rho^{\bm{u}}\left(\bm{f}\left(t,\bm{x}_{t}^{\bm{u}}\right)+\bm{g}\left(t,\bm{x}_{t}^{\bm{u}}\right)\bm{u}\right)\right) \nonumber \\ & \qquad \qquad\qquad \qquad \qquad \qquad = \frac{1}{2}\Delta_{\bm{\Sigma}\left(t,\bm{x}_{t}^{\bm{u}}\right)}\:\rho^{\bm{u}},\label{CASBPpdeconstraint}
\end{align}
\label{CASBP}
\end{subequations}
where the weighted Laplacian
\begin{align}
\Delta_{\bm{\Sigma}(t,\bm{x})}\:\rho := \sum_{i,j=1}^{n}\dfrac{\partial^{2}}{\partial x_{i} \partial x_{j}}\left(\bm{\Sigma}_{ij}(t,\bm{x})\rho(t,\bm{x})\right).
\label{DefWeightedLaplacian}    
\end{align}
The objective \eqref{CASBPobjective} is an expected additive cost comprising of a state cost-to-go and control effort. The constraint \eqref{CASBPpdeconstraint} is the controlled Fokker-Planck-Kolmogorov PDE governing the dynamics of the controlled PDF. In other words, \eqref{CASBPpdeconstraint} is the macroscopic dynamics associated with the controlled sample path dynamics \eqref{ControlledSDE}.

Assuming \textbf{A1}-\textbf{A2} holds, the endpoint PDFs $\rho_0,\rho_1$ have finite second moments, and the state cost $q$ suitably smooth, the existence-uniqueness for \eqref{CASBP} can be established \cite{wakolbinger1990schrodinger}.


\section{Necessary Conditions for Optimality}\label{sec:ConditionsForOptimality}
The Theorem \ref{Thm:1stOrderConditionsOfOptimality} next derives the necessary conditions for optimality for problem \eqref{CASBP}.

\begin{theorem}[Coupled Nonlinear PDEs with linear boundary conditions]\label{Thm:1stOrderConditionsOfOptimality}
Let \textbf{A1}-\textbf{A2} hold. Consider two PDFs $\rho_0,\rho_1$ having finite second moments, a given time horizon $[t_0,t_1]$, and the state cost $q$ suitably smooth. For $S\in\mathcal{C}^{1,2}\left([t_0,t_1];\mathbb{R}^{n}\right)$, the necessary conditions for optimality for problem \eqref{CASBP} are
\begin{subequations}
\begin{align}
\dfrac{\partial\rho^{\bm{u}}_{\mathrm{opt}}}{\partial t} + \nabla_{\bm{x}}\cdot\left(\rho^{\bm{u}}_{\mathrm{opt}}\left(\bm{f}+\bm{g}\bm{g}^{\top}\nabla_{\bm{x}}S\right)\right) = \frac{1}{2}\Delta_{\bm{\Sigma}}\:\rho^{\bm{u}}_{\mathrm{opt}}, \nonumber
 \\ \label{PrimalPDE} \textrm{(primal PDE)}  \\ \dfrac{\partial S}{\partial t} +\langle\nabla_{\bm{x}}S,\bm{f}\rangle +\frac{1}{2}\langle\nabla_{\bm{x}}S,\bm{gg}^{\top}\nabla_{\bm{x}}S\rangle +\frac{1}{2}\langle\bm{\Sigma},\hess_{\bm{x}}S\rangle = q,\nonumber \\ 
 \text{(dual PDE)} \label{DualPDE}\\
\rho^{\bm{u}}_{\mathrm{opt}}\left(t=t_0,\cdot\right)=\rho_0(\cdot), \quad \rho^{\bm{u}}_{\mathrm{opt}}\left(t=t_1,\cdot\right)=\rho_1(\cdot). \nonumber  \\
\label{PrimalBC} (\text{primal boundary conditions})
\end{align}
\label{FOCO}    
\end{subequations}
The optimal control is
\begin{align}
\bm{u}_{\mathrm{opt}} = \bm{g}^{\top}\nabla_{\bm{x}}S.
\label{OptimalControl}    
\end{align}
\end{theorem}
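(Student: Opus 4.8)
The plan is to treat \eqref{CASBP} as an equality-constrained calculus-of-variations problem and extract the stationarity conditions through a Lagrangian. First I would adjoin the controlled Fokker--Planck--Kolmogorov constraint \eqref{CASBPpdeconstraint} to the cost \eqref{CASBPobjective} with a co-state multiplier $S\in\mathcal{C}^{1,2}([t_0,t_1];\mathbb{R}^n)$, forming
\begin{align*}
\mathcal{L}(\rho^{\bm{u}},\bm{u},S) = \int_{t_0}^{t_1}\!\!\int_{\mathbb{R}^n}\Big[&\rho^{\bm{u}}\big(q + \tfrac{1}{2}\|\bm{u}\|_2^2\big) \\
&+ S\big(\partial_t\rho^{\bm{u}} + \nabla_{\bm{x}}\!\cdot(\rho^{\bm{u}}(\bm{f}+\bm{g}\bm{u})) - \tfrac{1}{2}\Delta_{\bm{\Sigma}}\rho^{\bm{u}}\big)\Big]\differential\bm{x}\,\differential t .
\end{align*}
Next I would move derivatives off $\rho^{\bm{u}}$ by integration by parts: once in $t$ -- the endpoint term $\int_{\mathbb{R}^n}S\rho^{\bm{u}}\differential\bm{x}\big|_{t_0}^{t_1}$ is immaterial for the interior stationarity because membership in $\mathcal{P}_{01}$ pins the endpoints, so $\delta\rho^{\bm{u}}$ vanishes at $t_0$ and $t_1$ -- and twice in $\bm{x}$ on the advection and weighted-Laplacian terms, using the finite second moments of $\rho_0,\rho_1$ and sufficient decay at infinity to discard the spatial boundary terms, so that by \eqref{DefWeightedLaplacian} the diffusion contributes $\tfrac{1}{2}\langle\bm{\Sigma},\hess_{\bm{x}}S\rangle$. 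This rewrites $\mathcal{L}$, up to that endpoint term, as $\int_{t_0}^{t_1}\!\int_{\mathbb{R}^n}\rho^{\bm{u}}\,\mathcal{H}\,\differential\bm{x}\,\differential t$ with integrand density $\mathcal{H}:=q + \tfrac{1}{2}\|\bm{u}\|_2^2 - \partial_t S - \langle\nabla_{\bm{x}}S,\bm{f}+\bm{g}\bm{u}\rangle - \tfrac{1}{2}\langle\bm{\Sigma},\hess_{\bm{x}}S\rangle$.

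I would then carry out the three stationarity computations in turn. (i) For fixed $\rho^{\bm{u}}\ge 0$ the pointwise minimization of $\mathcal{H}$ over $\bm{u}$ reduces to minimizing the strictly convex quadratic $\tfrac{1}{2}\|\bm{u}\|_2^2 - \langle\bm{g}^{\top}\nabla_{\bm{x}}S,\bm{u}\rangle$, whose unique minimizer is $\bm{u}_{\mathrm{opt}}=\bm{g}^{\top}\nabla_{\bm{x}}S$, i.e. \eqref{OptimalControl}; back-substitution collapses the $\bm{u}$-dependent part of $\mathcal{H}$ to $-\tfrac{1}{2}\langle\nabla_{\bm{x}}S,\bm{g}\bm{g}^{\top}\nabla_{\bm{x}}S\rangle$. (ii) After inserting $\bm{u}=\bm{u}_{\mathrm{opt}}$ the Lagrangian is affine in $\rho^{\bm{u}}$, so stationarity with respect to free interior variations $\delta\rho^{\bm{u}}$ forces the residual multiplier to vanish: $q - \partial_t S - \langle\nabla_{\bm{x}}S,\bm{f}\rangle - \tfrac{1}{2}\langle\nabla_{\bm{x}}S,\bm{g}\bm{g}^{\top}\nabla_{\bm{x}}S\rangle - \tfrac{1}{2}\langle\bm{\Sigma},\hess_{\bm{x}}S\rangle = 0$, which rearranges to the dual PDE \eqref{DualPDE}. (iii) Stationarity with respect to $S$ returns the constraint \eqref{CASBPpdeconstraint} evaluated at $\bm{u}=\bm{u}_{\mathrm{opt}}$, namely the primal PDE \eqref{PrimalPDE}, while $\rho^{\bm{u}}_{\mathrm{opt}}\in\mathcal{P}_{01}$ supplies the endpoint conditions \eqref{PrimalBC}.

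I expect the main obstacle to be the rigor of the variational argument rather than the algebra: one must show that the constraints defining $\mathcal{P}_{01}$ -- pointwise nonnegativity and unit mass -- are not binding, so that the interior variations $\delta\rho^{\bm{u}}$ are genuinely free and $\mathcal{H}\big|_{\bm{u}=\bm{u}_{\mathrm{opt}}}$ may be equated to zero. For this I would invoke \textbf{A1}--\textbf{A2}, which (as noted right after \textbf{A2}) yield a strong solution and an everywhere-positive transition density for the uncontrolled flow; a Girsanov change of measure induced by the finite-energy perturbation $\bm{g}\bm{u}$ then preserves strict positivity of the optimal marginal $\rho^{\bm{u}}_{\mathrm{opt}}$, while conservation of mass along \eqref{CASBPpdeconstraint} makes the multiplier of the normalization a function of $t$ alone that is absorbed into $S$. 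The remaining technical points -- admissibility of the integrations by parts, the posited regularity $S\in\mathcal{C}^{1,2}$, and the legitimacy of exchanging the infimum over $\bm{u}$ with the variation over $\rho^{\bm{u}}$ -- follow from the standing moment and smoothness hypotheses, the joint convexity of \eqref{CASBP} in the pair $(\rho^{\bm{u}},\rho^{\bm{u}}\bm{u})$, and the existence--uniqueness result cited after \eqref{CASBP}.
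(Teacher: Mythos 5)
Your proposal is correct and follows essentially the same route as the paper: adjoin the Fokker--Planck--Kolmogorov constraint with multiplier $S$, integrate by parts in $t$ and twice in $\bm{x}$ to rewrite the Lagrangian as a linear functional of $\rho^{\bm{u}}$, minimize pointwise over $\bm{u}$ to get \eqref{OptimalControl}, and then read off \eqref{DualPDE} from stationarity in $\rho^{\bm{u}}$, \eqref{PrimalPDE} from the constraint at $\bm{u}=\bm{u}_{\mathrm{opt}}$, and \eqref{PrimalBC} from membership in $\mathcal{P}_{01}$. Your added remarks on why the nonnegativity and normalization constraints are non-binding go slightly beyond what the paper records, but the argument is the same.
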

\begin{proof}
Consider the Lagrangian $\mathcal{L}$ for the constrained variational problem \eqref{CASBP}:
\begin{align}
\mathcal{L}\left(\rho^{\boldsymbol{u}},\bm{u},S\right) :=& \int_{t_0}^{t_1} \int_{\mathbb{R}^{n}} \left(q+\frac{1}{2}\|\boldsymbol{u}\|_2^2\right) \rho^{\boldsymbol{u}} \mathrm{d}\boldsymbol{x} \mathrm{~d} t \nonumber \\  &+\int_{t_0}^{t_1} \int_{\mathbb{R}^{n}} S \frac{\partial \rho^{\boldsymbol{u}}}{\partial t} \mathrm{~d} \boldsymbol{x} ~\differential t \nonumber \\ &+ \int_{t_0}^{t_1} \int_{\mathbb{R}^{n}}\left(\nabla_{\bm{x}}\cdot\left(\bm{f}+\bm{gu}\right)\right)S\differential\bm{x}~\differential t\nonumber\\
&- \frac{1}{2}\int_{t_0}^{t_1} \int_{\mathbb{R}^{n}} \left(\Delta_{\bm{\Sigma}}\rho^{\bm{u}}\right) S\differential\bm{x}~\differential t,
\label{defLagrangian}
\end{align}
where the Lagrange multiplier $S\in\mathcal{C}^{1,2}\left([t_0,t_1];\mathbb{R}^{n}\right)$. The idea now is to perform integration-by-parts for the second, third and fourth summands in \eqref{defLagrangian} such that the differential operators acting on $\rho^{\bm{u}}$ gets transferred to those of $S$. This will then allow us to re-write \eqref{defLagrangian} as a linear functional in $\rho^{\bm{u}}$ only.

Specifically, for the second summand in \eqref{defLagrangian}, we apply the Fubini-Torreli theorem to switch the order of integration and perform integration-by-parts w.r.t. $t$, to obtain
\begin{align}
&\int_{t_0}^{t_1} \int_{\mathbb{R}^{n}} S \frac{\partial \rho^{\boldsymbol{u}}}{\partial t} \mathrm{~d} \boldsymbol{x} ~\differential t \nonumber  \\ &= \underbrace{\int_{\mathbb{R}^{n}}\left(S(t_1,\bm{x})\rho_{1}(\bm{x}) - S(t_0,\bm{x})\rho_0(\bm{x})\right)\differential\bm{x}}_{\text{constant over}\,\mathcal{P}_{01}\times\mathcal{U}} \nonumber \\  &\qquad -\int_{t_0}^{t_1} \int_{\mathbb{R}^{n}}\dfrac{\partial S}{\partial t}\rho^{\bm{u}}\differential\bm{x}~\differential t.
\label{IBP2ndSummand}    
\end{align}

For the third summand in \eqref{defLagrangian}, integration-by-parts w.r.t. $\bm{x}$, and assuming the limits at $\|\bm{x}\|_2\rightarrow\infty$ are zero, we find
\begin{align}
\int_{t_0}^{t_1} \int_{\mathbb{R}^{n}}&\left(\nabla_{\bm{x}}\cdot\left(\bm{f}+\bm{gu}\right)\right)S\differential\bm{x}~\differential t \nonumber \\ &= -\int_{t_0}^{t_1}\int_{\mathbb{R}^{n}}\langle\nabla_{\bm{x}}S,\bm{f}+\bm{gu}\rangle\rho^{\bm{u}}\differential\bm{x}~\differential t.
\label{IBP3rdSummand}    
\end{align}

For the fourth summand in \eqref{defLagrangian},
two-fold integration by parts w.r.t. $\bm{x}$ yields
\begin{align}
\int_{\mathbb{R}^{n}} \left(\Delta_{\bm{\Sigma}}\rho^{\bm{u}}\right) S\differential\bm{x}~\differential t = \int_{\mathbb{R}^{n}} \langle\bm{\Sigma},\hess_{\bm{x}}S\rangle\rho^{\bm{u}}\differential\bm{x}.
\label{IBP4thSummand}    
\end{align}

Substituting \eqref{IBP2ndSummand}, \eqref{IBP3rdSummand}, \eqref{IBP4thSummand} in \eqref{defLagrangian}, we obtain
\begin{align}
&\mathcal{L}\left(\rho^{\boldsymbol{u}},\bm{u},S\right) = \text{constant}\;+\;\int_{t_0}^{t_1} \int_{\mathbb{R}^{n}} \left(q+\frac{1}{2}\|\boldsymbol{u}\|_2^2 -\dfrac{\partial S}{\partial t} \right. \nonumber \\ &\qquad\left. -\langle\nabla_{\bm{x}}S,\bm{f}+\bm{gu}\rangle -\frac{1}{2}\langle\bm{\Sigma},\hess_{\bm{x}}S\rangle\right)\rho^{\bm{u}}\differential\bm{x}~\differential t.
\label{LagrangianSimplified}    
\end{align}
Minimization of \eqref{LagrangianSimplified} w.r.t. $\bm{u}$ gives the optimal control \eqref{OptimalControl}.

Substituting \eqref{OptimalControl} in the primal feasibility constraint \eqref{CASBPpdeconstraint}, we arrive at \eqref{PrimalPDE}.

Substituting \eqref{OptimalControl} back into \eqref{LagrangianSimplified}, and requiring the integral to be zero, yields the dynamic programming equation:
\begin{align}
\int_{t_0}^{t_1} \int_{\mathbb{R}^{n}} \left(q-\dfrac{\partial S}{\partial t} -\langle\nabla_{\bm{x}}S,\bm{f}\rangle -\frac{1}{2}\langle\nabla_{\bm{x}}S,\bm{gg}^{\top}\nabla_{\bm{x}}S\rangle \right. \nonumber \\ \left. -\frac{1}{2}\langle\bm{\Sigma},\hess_{\bm{x}}S\rangle\right)\rho^{\bm{u}}\differential\bm{x}~\differential t = 0.
\label{DPequation}    
\end{align}
Since \eqref{DPequation} should be satisfied for arbitrary $\rho^{\bm{u}}$, we must have
\eqref{DualPDE}.

The restriction of $\rho^{\bm{u}}$ to $\mathcal{P}_{01}$ in \eqref{CASBPobjective} yields \eqref{PrimalBC}.
\end{proof}

\begin{remark}
The derived necessary conditions \eqref{FOCO} comprise of two coupled nonlinear PDEs \eqref{PrimalPDE}-\eqref{DualPDE} in the primal-dual variable pair $\left(\rho^{\bm{u}}_{\mathrm{opt}},S\right)$ with two linear boundary conditions \eqref{PrimalBC}. What makes this system atypical is that both of these boundary conditions are available for the single variable $\rho^{\bm{u}}_{\mathrm{opt}}$. This juxtaposes with classical stochastic optimal control where a terminal cost  would generate a boundary condition on $S$ which would in turn make the resulting system relatively straightforward for computation. In contrast, enforcing state statistics at the two endpoints yield \eqref{PrimalBC}, thereby causing unavailability of boundary condition on $S$.   
\end{remark}

\begin{remark}
We note that Theorem \ref{Thm:1stOrderConditionsOfOptimality} does not require the input matrix $\bm{g}$ and the diffusion coefficient $\bm{\sigma}$ to be related in any way. Indeed, the $m$-dimensional input and the $p$-dimensional noise channels in \eqref{ControlledSDE} are unrelated so far.    
\end{remark}


\section{Hopf-Cole Transform and Boundary-coupled Forward-Backward PDEs}\label{sec:HopfColeKolomogorov}
For the primal-dual pair $\left(\rho^{\bm{u}}_{\mathrm{opt}},S\right)$, we next perform a change-of-variables using the Hopf-Cole transform \cite{hopf1950partial,cole1951quasi}:
\begin{align}
\left(\rho^{\bm{u}}_{\mathrm{opt}},S\right)\mapsto\left(\widehat{\varphi},\varphi\right) := \left(\rho^{\bm{u}}_{\mathrm{opt}}\exp\left(-S/\lambda\right),\exp\left(S/\lambda\right)\right), \;\lambda>0.
\label{HopfCole}
\end{align}
The constant $\lambda>0$ is a scaling parameter. 

The appeal of \eqref{HopfCole} is that under certain conditions, it converts \eqref{FOCO}, a system of coupled PDEs with linear boundary conditions, to a system of decoupled PDEs with bilinear boundary conditions. This is summarized in Corollary \ref{Corollary:DecoupledLinearPDEs} which follows from the more general conclusion in Theorem \ref{Thm:ForwardBackward}. The latter is the main result of this section.

We mention here that the $S\mapsto\varphi$ part of \eqref{HopfCole}, i.e., 
$$\varphi = \exp(S/\lambda),$$
is sometimes referred to as Fleming's logarithmic transform \cite{fleming2005logarithmic}, and has been used in general stochastic control \cite{fleming1983stochastic,dai1991stochastic,kappen2005path,todorov2009efficient} and filtering \cite{fleming1982optimal}. For connections between such transforms and the large deviations principle, see e.g., \cite{chetrite2015variational,bierkens2016linear}. Variants of Schr\"{o}dinger bridge problems also admit large deviation interpretations; see e.g., \cite{dawson1990schrodinger,leonard2011stochastic,chen2021stochastic,teter2024schr,teter2025probabilistic}.

\subsection{Derivation for the PDEs in $\left(\widehat{\varphi},\varphi\right)$}\label{subsec:phiphihatPDEs}

Lemma \ref{LemmaWeightedLaplacian} stated next is needed to deduce Theorem \ref{Thm:ForwardBackward} that follows.

\begin{lemma}[Weighted Laplacian of a product]\label{LemmaWeightedLaplacian}
Given scalar fields $\alpha,\beta:\mathbb{R}^{n}\mapsto\mathbb{R}$ and symmetric matrix field $\bm{\Sigma}:\mathbb{R}^{n}\mapsto\mathbb{S}^{n}$ such that $\alpha,\beta,\bm{\Sigma}\in\mathcal{C}^{2}\left(\mathbb{R}^{n}\right)$, we have
\begin{align}
\Delta_{\bm{\Sigma}}\left(\alpha\beta\right) &=  \alpha\langle\bm{\Sigma},\hess_{\bm{x}}\beta\rangle + \beta\langle\bm{\Sigma},\hess_{\bm{x}}\alpha\rangle \nonumber \\ &+ 2\alpha\langle\nabla_{\bm{x}}\cdot\bm{\Sigma},\nabla_{\bm{x}}\beta\rangle + 2\beta\langle\nabla_{\bm{x}}\cdot\bm{\Sigma},\nabla_{\bm{x}}\alpha\rangle\nonumber\\
&+ \alpha\beta\underbrace{\langle \hess_{\bm{x}},\bm{\Sigma}\rangle}_{=\bm{1}^{\top}\bm{\Sigma}\bm{1}} + 2\underbrace{\langle\nabla_{\bm{x}}\alpha,\bm{\Sigma}\nabla_{\bm{x}}\beta\rangle}_{=\langle\nabla_{\bm{x}}\beta,\bm{\Sigma}\nabla_{\bm{x}}\alpha\rangle},
\label{WeightedLaplacianOfProduct}
\end{align}
wherein the divergence of a matrix field is understood as a vector with elements
$$\left(\nabla_{\bm{x}}\cdot\bm{\Sigma}\right)_{i} := \displaystyle\sum_{j=1}^{n}\dfrac{\partial\Sigma_{ij}}{\partial x_{j}} \quad\forall i\in\{1,\hdots,n\}.$$
The following special cases of \eqref{WeightedLaplacianOfProduct} are also of interest:
\begin{align}
&\text{(the case $\beta\equiv 1$)}\nonumber\\&\quad\Delta_{\bm{\Sigma}}\:\alpha = \alpha\langle \hess_{\bm{x}},\bm{\Sigma}\rangle + \langle\bm{\Sigma},\hess_{\bm{x}}\alpha\rangle + 2\langle\nabla_{\bm{x}}\cdot\bm{\Sigma},\nabla_{\bm{x}}\alpha\rangle ,\label{WeightedLaplacianSingleFunction}\\
&\text{(the case $\bm{\Sigma}\equiv\bm{I}$)}\nonumber\\ &\quad\Delta_{\bm{x}}\left(\alpha\beta\right) = \alpha\Delta_{\bm{x}} \beta + \beta\Delta_{\bm{x}} \alpha + 2\langle\nabla_{\bm{x}}\alpha,\nabla_{\bm{x}}\beta\rangle.\label{UnweightedLaplacianOfProduct}
\end{align}
\end{lemma}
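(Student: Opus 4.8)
The plan is to prove the product formula \eqref{WeightedLaplacianOfProduct} by unfolding the definition \eqref{DefWeightedLaplacian} of the weighted Laplacian and repeatedly applying the ordinary Leibniz product rule, then to read off the two special cases by substitution. First I would write
$$\Delta_{\bm{\Sigma}}(\alpha\beta) = \sum_{i,j=1}^{n}\dfrac{\partial^{2}}{\partial x_{i}\partial x_{j}}\bigl(\Sigma_{ij}\,\alpha\,\beta\bigr),$$
treating $\Sigma_{ij}\alpha\beta$ as a product of three scalar functions of $\bm{x}$. Applying the single-variable Leibniz rule to $\partial/\partial x_{j}$ first gives three terms, each of which is then differentiated once more by $\partial/\partial x_{i}$, producing a total of nine terms before summation over $i,j$. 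I would then group these nine terms by how the derivatives are distributed among $\alpha$, $\beta$, and $\bm{\Sigma}$: the two terms with both derivatives on $\beta$ or both on $\alpha$ assemble into $\alpha\langle\bm{\Sigma},\hess_{\bm{x}}\beta\rangle$ and $\beta\langle\bm{\Sigma},\hess_{\bm{x}}\alpha\rangle$; the term with both derivatives on $\bm{\Sigma}$ gives $\alpha\beta\sum_{i,j}\partial^{2}\Sigma_{ij}/\partial x_{i}\partial x_{j} = \alpha\beta\,\langle\hess_{\bm{x}},\bm{\Sigma}\rangle$; the mixed $\alpha$–$\beta$ terms (one derivative each) appear twice by the symmetry $i\leftrightarrow j$ and combine to $2\langle\nabla_{\bm{x}}\alpha,\bm{\Sigma}\nabla_{\bm{x}}\beta\rangle$; and the mixed $\bm{\Sigma}$–$\alpha$ and $\bm{\Sigma}$–$\beta$ cross terms, again doubled by symmetry, give $2\beta\langle\nabla_{\bm{x}}\cdot\bm{\Sigma},\nabla_{\bm{x}}\alpha\rangle$ and $2\alpha\langle\nabla_{\bm{x}}\cdot\bm{\Sigma},\nabla_{\bm{x}}\beta\rangle$, using the stated definition $(\nabla_{\bm{x}}\cdot\bm{\Sigma})_{i}=\sum_{j}\partial\Sigma_{ij}/\partial x_{j}$.

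The main bookkeeping obstacle is the careful use of the symmetry of $\bm{\Sigma}$ and the symmetry of the index pair $(i,j)$ to see that certain cross terms collect with a factor of $2$ rather than staying as two separate (equal) sums; this is where one must be precise about which index is summed in the divergence and Hessian contractions, and about the fact that $\langle\nabla_{\bm{x}}\alpha,\bm{\Sigma}\nabla_{\bm{x}}\beta\rangle = \langle\nabla_{\bm{x}}\beta,\bm{\Sigma}\nabla_{\bm{x}}\alpha\rangle$ because $\bm{\Sigma}=\bm{\Sigma}^{\top}$. The $\mathcal{C}^{2}$ hypotheses on $\alpha,\beta,\bm{\Sigma}$ guarantee all second partials exist and mixed partials commute, which legitimizes the regrouping.

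Finally, the special case $\beta\equiv 1$ is obtained by setting $\beta=1$ (so $\nabla_{\bm{x}}\beta=\bm{0}$, $\hess_{\bm{x}}\beta=\bm{0}$) in \eqref{WeightedLaplacianOfProduct}, which immediately yields \eqref{WeightedLaplacianSingleFunction}; and the case $\bm{\Sigma}\equiv\bm{I}$ is obtained by noting $\nabla_{\bm{x}}\cdot\bm{I}=\bm{0}$, $\langle\hess_{\bm{x}},\bm{I}\rangle = \Delta_{\bm{x}}(\,\cdot\,)$ acting trivially (the $\alpha\beta$ coefficient term vanishes since the entries of $\bm{I}$ are constant), $\langle\bm{I},\hess_{\bm{x}}\beta\rangle=\Delta_{\bm{x}}\beta$, and $\langle\nabla_{\bm{x}}\alpha,\bm{I}\nabla_{\bm{x}}\beta\rangle=\langle\nabla_{\bm{x}}\alpha,\nabla_{\bm{x}}\beta\rangle$, recovering the classical identity \eqref{UnweightedLaplacianOfProduct}. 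I expect the whole argument to be short once the nine-term expansion is organized correctly.
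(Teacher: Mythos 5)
Your proposal is correct and follows essentially the same route as the paper's proof: a direct Leibniz expansion of $\sum_{i,j}\partial^2(\Sigma_{ij}\alpha\beta)/\partial x_i\partial x_j$ into the nine terms, regrouped using the symmetry of $\bm{\Sigma}$ and of the index pair $(i,j)$, with the special cases obtained by substituting $\beta\equiv 1$ and $\bm{\Sigma}\equiv\bm{I}$. No gaps.
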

\begin{proof}
From \eqref{DefWeightedLaplacian}, we find
\begin{align}
\Delta_{\bm{\Sigma}}\left(\alpha\beta\right) &= \sum_{i,j=1}^{n}\dfrac{\partial}{ \partial x_j}\left(\dfrac{\partial}{\partial x_i}\left(\alpha\beta\Sigma_{ij}\right)\right)\nonumber\\
&= \sum_{i,j=1}^{n}\bigg\{\alpha\beta\dfrac{\partial^2\Sigma_{ij}}{\partial x_i \partial x_j} + \Sigma_{ij}\alpha\dfrac{\partial^2\beta}{\partial x_i\partial x_j} + \Sigma_{ij}\beta\dfrac{\partial^2\alpha}{\partial x_i\partial x_j} \nonumber\\
&\qquad \qquad+ \alpha\left(\dfrac{\partial\beta}{\partial x_{j}}\dfrac{\partial\Sigma_{ij}}{\partial x_i} + \dfrac{\partial\beta}{\partial x_{i}}\dfrac{\partial\Sigma_{ij}}{\partial x_j}\right) \nonumber\\
&\qquad\qquad+ \beta\left(\dfrac{\partial\alpha}{\partial x_{j}}\dfrac{\partial\Sigma_{ij}}{\partial x_i} + \dfrac{\partial\alpha}{\partial x_{i}}\dfrac{\partial\Sigma_{ij}}{\partial x_j}\right) \nonumber\\
&\qquad \qquad+ \Sigma_{ij}\left(\dfrac{\partial\alpha}{\partial x_i}\dfrac{\partial\beta}{\partial x_j} + \dfrac{\partial\alpha}{\partial x_j}\dfrac{\partial\beta}{\partial x_i}\right)\bigg\},
\label{WeightedLaplacianOfProductElementwise}    
\end{align}
which yields \eqref{WeightedLaplacianOfProduct}. The identities \eqref{WeightedLaplacianSingleFunction}-\eqref{UnweightedLaplacianOfProduct} follow by substituting $g\equiv 1$ and $\bm{\Sigma}\equiv\bm{I}$, respectively, in \eqref{WeightedLaplacianOfProduct}. 
\end{proof}

\begin{theorem}[Boundary-coupled nonlinear PDEs]\label{Thm:ForwardBackward}
Consider the setting as in Theorem \ref{Thm:1stOrderConditionsOfOptimality}. Let the pair $\left(\rho^{\bm{u}}_{\mathrm{opt}},S\right)$ solve \eqref{FOCO}. The pair $\left(\widehat{\varphi},\varphi\right)$ given by \eqref{HopfCole} solves
\begin{subequations}
\begin{align}
&\dfrac{\partial\widehat{\varphi}}{\partial t} + \nabla_{\bm{x}}\cdot\left(\widehat{\varphi}\left(\bm{f}+\bm{f}_{\varphi}\right)\right) - \dfrac{1}{2}\Delta_{\bm{\Sigma}}\:\widehat{\varphi} + \left(\dfrac{q}{\lambda}+q_{\varphi}\right)\widehat{\varphi} = 0, \label{varphihatPDE}\\ 
&\dfrac{\partial\varphi}{\partial t} + \langle\nabla_{\bm{x}}\varphi,\bm{f}+\bm{f}_{\varphi}\rangle + \dfrac{1}{2}\langle\bm{\Sigma},\hess_{\bm{x}}\varphi\rangle -  \left(\dfrac{q}{\lambda}+q_{\varphi}\right)\varphi = 0, \label{varphiPDE}\\
&\widehat{\varphi}\left(t_0,\cdot\right)\varphi\left(t_0,\cdot\right)=\rho_0(\cdot), \quad \widehat{\varphi}\left(t_1,\cdot\right)\varphi\left(t_1,\cdot\right)=\rho_1(\cdot), \label{BilinearBC}
\end{align}
\label{BoundaryCoupledForwardBackward}   
\end{subequations}
where
\begin{align}
\bm{f}_{\varphi} &:= \left(\lambda\bm{gg}^{\top}-\bm{\Sigma}\right)\nabla_{\bm{x}}\log\varphi, \label{def_fphi}\\
q_{\varphi} &:= \dfrac{1}{2}\left(\nabla_{\bm{x}}\log\varphi\right)^{\top}\left(\lambda\bm{gg}^{\top}-\bm{\Sigma}\right)\nabla_{\bm{x}}\log\varphi. \label{def_qphi}
\end{align}
The minimizing pair $\left(\rho^{\bm{u}}_{\mathrm{opt}},\bm{u}_{\mathrm{opt}}\right)$ for problem \eqref{CASBP} is then recovered from
\begin{align}
\rho^{\bm{u}}_{\mathrm{opt}}(t,\cdot) &= \widehat{\varphi}(t,\cdot)\varphi(t,\cdot),\label{OptimalPDFfromFactors}\\
\bm{u}_{\mathrm{opt}}(t,\cdot) &= \lambda\:\bm{g}^{\top}\nabla_{(\cdot)}\log\varphi(t,\cdot),\label{OptimalControlfromFactors}
\end{align}
for all $t\in[t_0,t_1]$.
\end{theorem}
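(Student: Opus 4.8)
The plan is to treat the Hopf--Cole map \eqref{HopfCole} as an invertible change of variables, so that $\rho^{\bm u}_{\mathrm{opt}}=\widehat{\varphi}\varphi$ and $S=\lambda\log\varphi$ (here $\varphi=\exp(S/\lambda)>0$ is automatic for real-valued $S\in\mathcal{C}^{1,2}$, which legitimizes dividing by $\varphi$ and forming $\log\varphi$), and to substitute these into the optimality system \eqref{FOCO}. I would first process the dual PDE \eqref{DualPDE}, since it involves only $\varphi$. Using $\partial_t S=\lambda\,\partial_t\varphi/\varphi$, $\nabla_{\bm x}S=\lambda\nabla_{\bm x}\log\varphi=\lambda\nabla_{\bm x}\varphi/\varphi$, and $\hess_{\bm x}S=\lambda(\hess_{\bm x}\varphi/\varphi-\nabla_{\bm x}\varphi\,\nabla_{\bm x}\varphi^{\top}/\varphi^{2})$, substituting into \eqref{DualPDE} and multiplying through by $\varphi/\lambda$ gives $\partial_t\varphi+\langle\nabla_{\bm x}\varphi,\bm f\rangle+\tfrac12\langle\bm\Sigma,\hess_{\bm x}\varphi\rangle-\tfrac{q}{\lambda}\varphi+\tfrac{1}{2\varphi}\nabla_{\bm x}\varphi^{\top}(\lambda\bm{gg}^{\top}-\bm\Sigma)\nabla_{\bm x}\varphi=0$. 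Rewriting $\nabla_{\bm x}\varphi=\varphi\nabla_{\bm x}\log\varphi$ shows the last term equals $\varphi\,q_\varphi$ with $q_\varphi$ as in \eqref{def_qphi}, and also equals $\langle\nabla_{\bm x}\varphi,\bm f_\varphi\rangle-q_\varphi\varphi$ with $\bm f_\varphi$ as in \eqref{def_fphi}; this is precisely the extra advection-plus-reaction combination appearing in \eqref{varphiPDE}, so \eqref{varphiPDE} follows.

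Next I would substitute $\rho^{\bm u}_{\mathrm{opt}}=\widehat{\varphi}\varphi$ and $\bm{gg}^{\top}\nabla_{\bm x}S=\lambda\bm{gg}^{\top}\nabla_{\bm x}\varphi/\varphi$ into the primal PDE \eqref{PrimalPDE}. The time-derivative term splits as $\varphi\,\partial_t\widehat{\varphi}+\widehat{\varphi}\,\partial_t\varphi$; the drift term becomes $\nabla_{\bm x}\cdot(\widehat{\varphi}\varphi\bm f)+\lambda\,\nabla_{\bm x}\cdot(\widehat{\varphi}\,\bm{gg}^{\top}\nabla_{\bm x}\varphi)$; and for the diffusion term I would apply Lemma \ref{LemmaWeightedLaplacian} with $\alpha=\widehat{\varphi},\beta=\varphi$ together with the single-function identity \eqref{WeightedLaplacianSingleFunction} to get $\Delta_{\bm\Sigma}(\widehat{\varphi}\varphi)=\varphi\,\Delta_{\bm\Sigma}\widehat{\varphi}+\widehat{\varphi}\langle\bm\Sigma,\hess_{\bm x}\varphi\rangle+2\widehat{\varphi}\langle\nabla_{\bm x}\cdot\bm\Sigma,\nabla_{\bm x}\varphi\rangle+2\langle\nabla_{\bm x}\widehat{\varphi},\bm\Sigma\nabla_{\bm x}\varphi\rangle$. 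I would then eliminate the stray term $\widehat{\varphi}\,\partial_t\varphi$ by inserting $\partial_t\varphi$ from the already-derived equation \eqref{varphiPDE}. Expanding the divergences by the product rule, using $\nabla_{\bm x}\cdot(\bm\Sigma\nabla_{\bm x}\varphi)=\langle\nabla_{\bm x}\cdot\bm\Sigma,\nabla_{\bm x}\varphi\rangle+\langle\bm\Sigma,\hess_{\bm x}\varphi\rangle$, and collecting terms, the left side should reorganize into $\varphi$ times the left-hand side of \eqref{varphihatPDE}; dividing by $\varphi>0$ yields \eqref{varphihatPDE}. Concretely, one checks that $\nabla_{\bm x}\cdot(\widehat{\varphi}\varphi\bm f)-\widehat{\varphi}\langle\nabla_{\bm x}\varphi,\bm f\rangle=\varphi\,\nabla_{\bm x}\cdot(\widehat{\varphi}\bm f)$, and that the remaining pieces (the $\lambda\bm{gg}^{\top}$-divergence, the $\langle\nabla_{\bm x}\widehat{\varphi},\bm\Sigma\nabla_{\bm x}\varphi\rangle$, the $\langle\nabla_{\bm x}\cdot\bm\Sigma,\nabla_{\bm x}\varphi\rangle$, the $\langle\bm\Sigma,\hess_{\bm x}\varphi\rangle$, and the $\langle\nabla_{\bm x}\varphi,\bm f_\varphi\rangle$ terms) combine into $\varphi\,\nabla_{\bm x}\cdot(\widehat{\varphi}\bm f_\varphi)$ once one recognizes $(\lambda\bm{gg}^{\top}-\bm\Sigma)\nabla_{\bm x}\varphi/\varphi=\bm f_\varphi$.

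The remaining claims are immediate. Since $\widehat{\varphi}\varphi=\rho^{\bm u}_{\mathrm{opt}}\exp(-S/\lambda)\exp(S/\lambda)=\rho^{\bm u}_{\mathrm{opt}}$, evaluating at $t=t_0,t_1$ and invoking \eqref{PrimalBC} gives the bilinear boundary conditions \eqref{BilinearBC}, and this same identity is \eqref{OptimalPDFfromFactors}; then \eqref{OptimalControlfromFactors} follows from \eqref{OptimalControl} and $\nabla_{\bm x}S=\lambda\nabla_{\bm x}\log\varphi$.

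I expect the main obstacle to be the bookkeeping in the $\widehat{\varphi}$ step: one must carefully track the cross terms generated by the product rule on $\nabla_{\bm x}\cdot(\widehat{\varphi}\varphi\bm f)$, on $\nabla_{\bm x}\cdot(\widehat{\varphi}\,\bm{gg}^{\top}\nabla_{\bm x}\varphi)$, and on $\Delta_{\bm\Sigma}(\widehat{\varphi}\varphi)$, and verify that exactly the terms not proportional to the ``clean'' forward advection--diffusion--reaction operator on $\widehat{\varphi}$ cancel against the contribution imported from \eqref{varphiPDE}---in particular that the $\langle\nabla_{\bm x}\widehat{\varphi},\bm\Sigma\nabla_{\bm x}\varphi\rangle$ and $\langle\nabla_{\bm x}\cdot\bm\Sigma,\nabla_{\bm x}\varphi\rangle$ contributions disappear---leaving only the extra drift $\bm f_\varphi$ and reaction $q_\varphi$. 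A minor, routine point is to confirm that the regularity assumed in Theorem \ref{Thm:1stOrderConditionsOfOptimality}, together with the positivity of $\rho^{\bm u}_{\mathrm{opt}}$ and $\varphi$, is enough for Lemma \ref{LemmaWeightedLaplacian} and the pointwise manipulations above to be valid.
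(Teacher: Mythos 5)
Your proposal is correct and follows essentially the same route as the paper: substitute $S=\lambda\log\varphi$ into the dual PDE (via the Hessian-of-log identity) to get \eqref{varphiPDE}, then substitute $\rho^{\bm u}_{\mathrm{opt}}=\widehat{\varphi}\varphi$ into the primal PDE, expand $\Delta_{\bm\Sigma}(\widehat{\varphi}\varphi)$ via Lemma \ref{LemmaWeightedLaplacian}, eliminate $\partial_t\varphi$ using the already-derived $\varphi$-equation, and collect the surviving cross terms into $\nabla_{\bm x}\cdot(\widehat{\varphi}\bm f_\varphi)+q_\varphi\widehat{\varphi}$. The bookkeeping you outline (including the cancellation of the $\langle\nabla_{\bm x}\widehat{\varphi},\bm\Sigma\nabla_{\bm x}\varphi\rangle$ and $\langle\nabla_{\bm x}\cdot\bm\Sigma,\nabla_{\bm x}\varphi\rangle$ pieces against the $\lambda\bm{gg}^{\top}$-divergence once rewritten in terms of $\lambda\bm{gg}^{\top}-\bm\Sigma$) checks out and matches the paper's computation.
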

\begin{proof}
Since $S\in\mathcal{C}^{1,2}\left([t_0,t_1];\mathbb{R}^{n}\right)$, the $S$ is bounded over its domain, and the mapping \eqref{HopfCole} is bijective. In particular, $\widehat{\varphi}(t,\cdot),\varphi(t,\cdot)$ are positive for all $t\in[t_0,t_1]$.

Multiplying $\widehat{\varphi}(t,\cdot)$ and $\varphi(t,\cdot)$ from \eqref{HopfCole} yields \eqref{OptimalPDFfromFactors}. Substituting $S = \lambda\log\varphi$ in \eqref{OptimalControl} yields \eqref{OptimalControlfromFactors}. Specializing the same for $t=t_0,t_1$, and using \eqref{PrimalBC}, we obtain \eqref{BilinearBC}.

What remains is to deduce \eqref{varphihatPDE}-\eqref{varphiPDE}. Substituting $$S=\lambda\log\varphi$$ in \eqref{DualPDE}, using the Hessian identity 
\begin{align}
\hess_{\bm{x}}\left(\lambda\log\varphi\right) &= \lambda\nabla_{\bm{x}}\circ\nabla_{\bm{x}}\log\varphi \nonumber \\ &= -\dfrac{\lambda}{\varphi^2}\left(\nabla_{\bm{x}}\varphi\right)\left(\nabla_{\bm{x}}\varphi\right)^{\top} + \dfrac{\lambda}{\varphi}\hess_{\bm{x}}\varphi,
\label{HessLogIdentity}    
\end{align}
followed by algebraic simplification, gives 
\begin{align}
\dfrac{\partial\varphi}{\partial t} + \langle\nabla_{\bm{x}}\varphi,\bm{f}\rangle + \dfrac{1}{2}\langle\bm{\Sigma},\hess_{\bm{x}}\varphi\rangle -  \dfrac{q\varphi}{\lambda} \nonumber \\ \qquad + \dfrac{1}{2}\langle\nabla_{\bm{x}}\varphi,\left(\lambda\bm{gg}^{\top}-\bm{\Sigma}\right)\nabla_{\bm{x}}\log\varphi\rangle = 0.
\label{varphiPDEsemifinal}
\end{align}
We next add and subtract (another) $$\dfrac{1}{2}\langle\nabla_{\bm{x}}\varphi,\left(\lambda\bm{gg}^{\top}-\bm{\Sigma}\right)\nabla_{\bm{x}}\log\varphi\rangle$$to the LHS of \eqref{varphiPDEsemifinal}, then group the resulting $+\langle\nabla_{\bm{x}}\varphi,\left(\lambda\bm{gg}^{\top}-\bm{\Sigma}\right)\nabla_{\bm{x}}\log\varphi\rangle$ with the drift term in \eqref{varphiPDEsemifinal}, and the 
\begin{align*}
    &-\dfrac{1}{2}\langle\nabla_{\bm{x}}\varphi,\left(\lambda\bm{gg}^{\top}-\bm{\Sigma}\right)\nabla_{\bm{x}}\log\varphi\rangle \\ &\qquad = -\dfrac{\varphi}{2}\langle\nabla_{\bm{x}}\log\varphi,\left(\lambda\bm{gg}^{\top}-\bm{\Sigma}\right)\nabla_{\bm{x}}\log\varphi\rangle
\end{align*} 
with the reaction term in \eqref{varphiPDEsemifinal}. This gives \eqref{varphiPDE}.

To derive \eqref{varphihatPDE}, we substitute \eqref{OptimalPDFfromFactors} and \eqref{OptimalControlfromFactors} in the primal PDE \eqref{PrimalPDE}. These substitutions allow us to write the LHS of \eqref{PrimalPDE} as:
\begin{align}
{\mathrm{LHS}} &= \widehat{\varphi}\dfrac{\partial\varphi}{\partial t} + \varphi\dfrac{\partial\widehat{\varphi}}{\partial t} + \nabla_{\bm{x}}\cdot\left(\varphi\widehat{\varphi}\bm{f}\right) + \nabla_{\bm{x}}\cdot\left(\widehat{\varphi}\:\lambda\bm{gg}^{\top}\nabla_{\bm{x}}\varphi\right)\nonumber\\
&= \widehat{\varphi}\bigg\{-\langle\nabla_{\bm{x}}\varphi,\bm{f}\rangle - \dfrac{1}{2}\langle\bm{\Sigma},\hess_{\bm{x}}\varphi\rangle +  \dfrac{q\varphi}{\lambda} \nonumber \\ &- \dfrac{1}{2}\langle\nabla_{\bm{x}}\varphi,\left(\lambda\bm{gg}^{\top} -\bm{\Sigma}\right)\nabla_{\bm{x}}\log\varphi\rangle\bigg\} + \varphi\dfrac{\partial\widehat{\varphi}}{\partial t} \nonumber\\
&\quad + \varphi\langle\nabla_{\bm{x}}\widehat{\varphi},\bm{f}\rangle + \widehat{\varphi}\langle\nabla_{\bm{x}}\varphi,\bm{f}\rangle + \varphi\widehat{\varphi}\nabla_{\bm{x}}\cdot\bm{f} \nonumber \\ & + \langle\nabla_{\bm{x}}\widehat{\varphi},\lambda\bm{gg}^{\top}\nabla_{\bm{x}}\varphi\rangle + \widehat{\varphi}\nabla_{\bm{x}}\cdot\left(\lambda\bm{gg}^{\top}\nabla_{\bm{x}}\varphi\right)\nonumber\\
&= \varphi\dfrac{\partial\widehat{\varphi}}{\partial t} + \varphi\nabla_{\bm{x}}\cdot\left(\widehat{\varphi}\bm{f}\right) + \varphi\dfrac{q\widehat{\varphi}}{\lambda} - \dfrac{\widehat{\varphi}}{2}\langle\bm{\Sigma},\hess_{\bm{x}}\varphi\rangle \nonumber \\ &- \dfrac{\widehat{\varphi}}{2}\langle\nabla_{\bm{x}}\log\varphi,\left(\lambda\bm{gg}^{\top}-\bm{\Sigma}\right)\nabla_{\bm{x}}\varphi\rangle \nonumber \\ &+  \langle\nabla_{\bm{x}}\widehat{\varphi},\lambda\bm{gg}^{\top}\nabla_{\bm{x}}\varphi\rangle + \widehat{\varphi}\nabla_{\bm{x}}\cdot\left(\lambda\bm{gg}^{\top}\nabla_{\bm{x}}\varphi\right),
\label{LHSofphihatPDEInMaking}    
\end{align}
where the expression inside the curly braces is due to \eqref{varphiPDE}. Using \eqref{WeightedLaplacianOfProduct} in Lemma \ref{LemmaWeightedLaplacian}, the same substitutions unpack the RHS of \eqref{PrimalPDE} as: 
\begin{align}
{\mathrm{RHS}}&=\dfrac{1}{2}\Delta_{\bm{\Sigma}}\left(\varphi\widehat{\varphi}\right)\nonumber\\
&=\dfrac{1}{2}\varphi\widehat{\varphi}\langle\hess_{\bm{x}},\bm{\Sigma}\rangle + \dfrac{1}{2}\varphi\langle\bm{\Sigma},\hess_{\bm{x}}\widehat{\varphi}\rangle \nonumber \\ &+ \dfrac{1}{2}\widehat{\varphi}\langle\bm{\Sigma},\hess_{\bm{x}}\varphi\rangle + \varphi\langle\nabla_{\bm{x}}\cdot\bm{\Sigma},\nabla_{\bm{x}}\widehat{\varphi}\rangle \nonumber\\
&+ \widehat{\varphi}\langle\nabla_{\bm{x}}\cdot\bm{\Sigma},\nabla_{\bm{x}}\varphi\rangle+\langle\nabla_{\bm{x}}\varphi,\bm{\Sigma}\nabla_{\bm{x}}\widehat{\varphi}\rangle.
\label{RHSofphihatPDEInMaking} 
\end{align}
For convenience, we refer to the six summands appearing in \eqref{RHSofphihatPDEInMaking}, from left to right, as term 1 to term 6. Comparing \eqref{LHSofphihatPDEInMaking} with \eqref{RHSofphihatPDEInMaking}, we make few observations:
\begin{itemize}
\item term 3 in \eqref{RHSofphihatPDEInMaking} can be combined with the term $-\dfrac{\widehat{\varphi}}{2}\langle\bm{\Sigma},\hess_{\bm{x}}\varphi\rangle$ in \eqref{LHSofphihatPDEInMaking},

\item term 6 in \eqref{RHSofphihatPDEInMaking} can be combined with the term $\langle\nabla_{\bm{x}}\widehat{\varphi},\lambda\bm{gg}^{\top}\nabla_{\bm{x}}\varphi\rangle$ in \eqref{LHSofphihatPDEInMaking},

\item the sum of the terms 1, 2 and 4 in \eqref{RHSofphihatPDEInMaking} equals $\dfrac{1}{2}\varphi\Delta_{\bm{\Sigma}}\:\widehat{\varphi}$, thanks to \eqref{WeightedLaplacianSingleFunction} in Lemma \ref{LemmaWeightedLaplacian}. 
\end{itemize}
Consequently, equating \eqref{LHSofphihatPDEInMaking} with \eqref{RHSofphihatPDEInMaking}, we obtain
\begin{align}
&\varphi\bigg\{\dfrac{\partial\widehat{\varphi}}{\partial t} + \nabla_{\bm{x}}\cdot\left(\widehat{\varphi}\bm{f}\right) + \dfrac{q\widehat{\varphi}}{\lambda} - \dfrac{1}{2}\Delta_{\bm{\Sigma}}\:\widehat{\varphi}\bigg\} \nonumber \\ &= \dfrac{\widehat{\varphi}}{2}\langle\nabla_{\bm{x}}\log\varphi,\left(\lambda\bm{gg}^{\top}-\bm{\Sigma}\right)\nabla_{\bm{x}}\varphi\rangle - \widehat{\varphi}\langle\lambda\bm{gg}^{\top}-\bm{\Sigma},\hess_{\bm{x}}\varphi\rangle\nonumber\\
&-\langle\nabla_{\bm{x}}\widehat{\varphi},\left(\lambda\bm{gg}^{\top}-\bm{\Sigma}\right)\nabla_{\bm{x}}\varphi\rangle - \widehat{\varphi}\langle\nabla_{\bm{x}}\cdot\left(\lambda\bm{gg}^{\top}-\bm{\Sigma}\right),\nabla_{\bm{x}}\varphi\rangle.
\label{EquatingLHSandRHSofphihatPDE}    
\end{align}
Dviding both sides of \eqref{EquatingLHSandRHSofphihatPDE} by $\varphi$ (recall that $\varphi$ is positive everywhere by definition), we find
\begin{align}
\dfrac{\partial\widehat{\varphi}}{\partial t} &+ \nabla_{\bm{x}}\cdot\left(\widehat{\varphi}\bm{f}\right) + \dfrac{q\widehat{\varphi}}{\lambda} - \dfrac{1}{2}\Delta_{\bm{\Sigma}}\:\widehat{\varphi} \nonumber \\ \qquad \qquad= &\dfrac{\widehat{\varphi}}{2}\langle\nabla_{\bm{x}}\log\varphi,\left(\lambda\bm{gg}^{\top}-\bm{\Sigma}\right)\nabla_{\bm{x}}\log\varphi\rangle \nonumber \\  \qquad &- \widehat{\varphi}\langle\lambda\bm{gg}^{\top}-\bm{\Sigma},\frac{1}{\varphi}\hess_{\bm{x}}\varphi\rangle\nonumber\\
&-\langle\nabla_{\bm{x}}\widehat{\varphi},\left(\lambda\bm{gg}^{\top} -\bm{\Sigma}\right)\nabla_{\bm{x}}\log\varphi\rangle \nonumber \\ &- \widehat{\varphi}\langle\nabla_{\bm{x}}\cdot\left(\lambda\bm{gg}^{\top}-\bm{\Sigma}\right),\nabla_{\bm{x}}\log\varphi\rangle\nonumber\\
&= -\nabla_{\bm{x}}\cdot\left(\widehat{\varphi}\left(\lambda\bm{gg}^{\top}-\bm{\Sigma}\right)\nabla_{\bm{x}}\log\varphi\right) \nonumber \\ & \qquad- \dfrac{\widehat{\varphi}}{2}\langle\nabla_{\bm{x}}\log\varphi,\left(\lambda\bm{gg}^{\top}-\bm{\Sigma}\right)\nabla_{\bm{x}}\log\phi\rangle \nonumber\\
&\stackrel{\eqref{def_fphi},\eqref{def_qphi}}{=} -\nabla_{\bm{x}}\cdot\left(\widehat{\varphi}\bm{f}_{\varphi}\right) - q_{\varphi}\widehat{\varphi},
\label{phihatPDEsemifinal}    
\end{align}
which is exactly \eqref{varphihatPDE}. This completes the proof.
\end{proof}

\begin{remark}
We refer to $\left(\widehat{\varphi},\varphi\right)$ as Schr\"{o}dinger factors \cite{aebi1996schrodinger,caluya2021wasserstein} since they comprise a factorization of $\rho^{\bm{u}}_{\mathrm{opt}}$, see \eqref{OptimalPDFfromFactors}.
\end{remark}

We next provide interpretations for the derived \eqref{BoundaryCoupledForwardBackward}, \eqref{def_fphi} and \eqref{def_qphi}. 

\subsection{Interpretations }\label{subsec:Interpretations}

\subsubsection*{Interpretation of \eqref{BoundaryCoupledForwardBackward}} 
The specific form of \eqref{BoundaryCoupledForwardBackward} with $\bm{f}_{\varphi},q_{\varphi}$ as in \eqref{def_fphi} and \eqref{def_qphi}, respectively, are new w.r.t. the existing literature. The PDEs \eqref{varphihatPDE}-\eqref{varphiPDE} can be seen as the forward-backward Kolmogorov advection-diffusion-reaction PDEs with drift coefficient $\bm{f}+\bm{f}_{\varphi}$, diffusion coefficient $\bm{\sigma}$, and reaction rate $q + q_{\varphi}$. In the special case of input and noise channels being the same (i.e., $\bm{gg}^{\top} \propto \bm{\sigma\sigma}^{\top}$), they recover known results such as \cite[eq. (5.10)]{chen2021stochastic} or \cite[Thm. 2]{caluya2021wasserstein}. Then, the relation $\bm{gg}^{\top} \propto \bm{\sigma\sigma}^{\top}$ can be seen as a generalized Einstein relation \cite{hernandez1989equilibrium,chen2015fast},\cite[Sec. III-B]{halder2022stochastic}. 

When these channels do not coincide, the difference $$\lambda\bm{gg}^{\top}-\bm{\Sigma}$$quantifies the imbalance between the control/forcing authority and the noise fluctuation, and causes an excess additive drift \eqref{def_fphi} and an excess additive reaction rate \eqref{def_qphi}, to ensure the conservation of optimally controlled probability mass.

Although in a different context, Kappen \cite[Sec. 3.1]{kappen2005path} provided a physical interpretation of $\bm{gg}^{\top} \propto \bm{\sigma\sigma}^{\top}$: in the directions of low noise, optimal control \eqref{OptimalControlfromFactors} is expensive, and vice versa. A geometric way to think about the same is depicted in Fig. \ref{fig:ControlAndNoiseChannel}. It appeals to the basic fact that the matrix-matrix self-outer product $\bm{gg}^{\top}$ (resp. $\bm{\sigma\sigma}^{\top}$) is a projector onto the span of columns of $\bm{g}$ (resp. $\bm{\sigma}$).

\begin{figure}[t]
\centering
\includegraphics[width=0.5\linewidth]{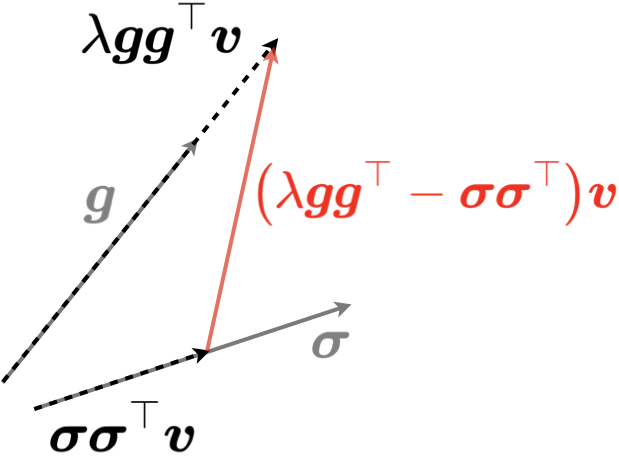}
\caption{When $m=p=1$, the vectors $\bm{g},\bm{\sigma}$ are shown as gray arrows. For fixed $\lambda>0$ and arbitrary nonzero vector $\bm{v}$, the dashed arrows are the projections $\lambda\bm{gg}^{\top}\bm{v}$ and $\bm{\sigma\sigma}^{\top}\bm{v}$ onto the span of $\bm{g}$ and $\bm{\sigma}$, respectively. When these projections become equal, their difference, shown in red, becomes zero.}
\label{fig:ControlAndNoiseChannel}
\end{figure}

\subsubsection*{Interpretation of \eqref{def_fphi}}
The excess additive drift $\bm{f}_{\varphi}$ in \eqref{def_fphi} is reminiscent of the excess additive drift in Doob's $h$-transform \cite{doob1957conditional}, \cite[Ch. IV-39]{rogers2000diffusions} that appears in conditioning an \emph{uncontrolled} It\^{o} diffusion \cite{pinsky1985convergence,deblassie1988doob,chetrite2015nonequilibrium}, and $\nabla_{\bm{x}}\log\varphi$ plays the role of score vector field \cite{song2019generative,song2020score,vahdat2021score}. The latter can in turn be thought of as a corrective advection 
to satisfy the endpoint constraints. However, the version \eqref{def_fphi} for control-affine Schr\"{o}dinger bridge is new. 

\subsubsection*{Interpretation of \eqref{def_qphi}}
The excess reaction rate $q_{\varphi}$ in \eqref{def_qphi} is reminiscent of the Fisher information that has appeared before in the Schr\"{o}dinger bridge problems as a state cost regularizer; see e.g., \cite{carlen1984conservative,leonard2013survey,carlen2014stochastic,chen2016relation,leger2019geometric}. However, the version appearing in \eqref{def_qphi} is different from these: it is weighted by (in general sign indefinite) $\lambda\bm{gg}^{\top}-\bm{\Sigma}$, and is specific to the control-affine case. Notably, the $q_{\varphi}$ here does not come from relating a Schr\"{o}dinger bridge problem with an equivalent optimal mass transport problem. Instead, it has an intrinsic control-theoretic origin: it results from the possible lack of mismatch between the input and noise channels.

\subsection{Implications }\label{subsec:Implications}
Notice that the PDEs \eqref{varphihatPDE}-\eqref{varphiPDE} are still \emph{coupled and nonlinear}. Similar to \eqref{PrimalPDE}-\eqref{DualPDE}, the equation-level coupling in \eqref{varphihatPDE}-\eqref{varphiPDE} is one way in the sense \eqref{varphiPDE} has no dependence on $\widehat{\varphi}$ but \eqref{varphihatPDE} depends on both $\varphi,\widehat{\varphi}$. Unlike \eqref{PrimalBC}, the boundary conditions \eqref{BilinearBC} are now bilinear. For these reasons, the utility of Hopf-Cole transform is not immediate. 

However, a closer inspection of \eqref{varphihatPDE}-\eqref{varphiPDE} reveals that the coupling and the nonlinearities emanate precisely from the $\varphi$-dependent vector field $\bm{f}_{\varphi}$ and the the $\varphi$-dependent reaction term $q_{\varphi}$ defined in \eqref{def_fphi} and \eqref{def_qphi}, respectively. The $\bm{f}_{\varphi}$ and $q_{\varphi}$, are both linear in the matrix $\lambda\bm{gg}^{\top}-\bm{\Sigma}$. So, these PDEs become \emph{decoupled and linear} provided the control and noise channels coincide. This is summarized in the following Corollary \ref{Corollary:DecoupledLinearPDEs}.

\begin{corollary}[Boundary-coupled linear PDEs]\label{Corollary:DecoupledLinearPDEs}
Consider the same setting as in Theorem \ref{Thm:ForwardBackward}. If  $\bm{gg}^{\top} \propto \bm{\sigma\sigma}^{\top}$ or equivalently, $\lambda\bm{gg}^{\top}- \bm{\Sigma}=\bm{0}$ for some $\lambda>0$, then \eqref{BoundaryCoupledForwardBackward} reduces to the following system of linear advection-diffusion-reaction PDEs that are only coupled through bilinear boundary conditions:
\begin{subequations}
\begin{align}
&\dfrac{\partial\widehat{\varphi}}{\partial t} + \nabla_{\bm{x}}\cdot\left(\widehat{\varphi}\bm{f}\right) - \dfrac{1}{2}\Delta_{\bm{\Sigma}}\:\widehat{\varphi} + \dfrac{q\widehat{\varphi}}{\lambda} = 0, \label{varphihatPDElinear}\\ 
&\dfrac{\partial\varphi}{\partial t} + \langle\nabla_{\bm{x}}\varphi,\bm{f}\rangle + \dfrac{1}{2}\langle\bm{\Sigma},\hess_{\bm{x}}\varphi\rangle -  \dfrac{q\varphi}{\lambda} = 0, \label{varphiPDElinear}\\
&\widehat{\varphi}\left(t_0,\cdot\right)\varphi\left(t_0,\cdot\right)=\rho_0(\cdot), \quad \widehat{\varphi}\left(t_1,\cdot\right)\varphi\left(t_1,\cdot\right)=\rho_1(\cdot). \label{BilinearBCAgain}
\end{align}
\label{BoundaryCoupledLinearForwardBackward}   
\end{subequations}
\end{corollary}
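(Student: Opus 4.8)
The plan is to obtain Corollary \ref{Corollary:DecoupledLinearPDEs} as a direct specialization of Theorem \ref{Thm:ForwardBackward}. First I would record the equivalence of the two forms of the hypothesis: $\bm{gg}^{\top}\propto\bm{\sigma\sigma}^{\top}$ means there is a scalar $\mu>0$ with $\bm{gg}^{\top}=\mu\,\bm{\sigma\sigma}^{\top}=\mu\bm{\Sigma}$ (using \eqref{defDiffusionTensor}); setting $\lambda:=1/\mu>0$ gives $\lambda\bm{gg}^{\top}-\bm{\Sigma}=\bm{0}$, and conversely $\lambda\bm{gg}^{\top}=\bm{\Sigma}$ for some $\lambda>0$ forces $\bm{gg}^{\top}\propto\bm{\sigma\sigma}^{\top}$. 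Since the $\lambda>0$ appearing in the Hopf-Cole transform \eqref{HopfCole} is a free scaling parameter, we are entitled to pick exactly this value.

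Next I would substitute $\lambda\bm{gg}^{\top}-\bm{\Sigma}=\bm{0}$ into the definitions \eqref{def_fphi} and \eqref{def_qphi} of the excess drift $\bm{f}_{\varphi}$ and excess reaction rate $q_{\varphi}$. Both are homogeneous (in particular linear) in the matrix $\lambda\bm{gg}^{\top}-\bm{\Sigma}$, hence $\bm{f}_{\varphi}\equiv\bm{0}$ and $q_{\varphi}\equiv 0$ identically on $[t_0,t_1]\times\mathbb{R}^{n}$, regardless of $\varphi$; the positivity of $\varphi$ from the proof of Theorem \ref{Thm:ForwardBackward}, which makes $\nabla_{\bm{x}}\log\varphi$ well defined, is used here only to guarantee that the expressions in \eqref{def_fphi}-\eqref{def_qphi} are meaningful before they are seen to vanish.

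Then I would insert $\bm{f}_{\varphi}=\bm{0}$ and $q_{\varphi}=0$ back into \eqref{varphihatPDE}-\eqref{varphiPDE}: the drift $\bm{f}+\bm{f}_{\varphi}$ collapses to $\bm{f}$ and the reaction coefficient $q/\lambda+q_{\varphi}$ collapses to $q/\lambda$, producing precisely \eqref{varphihatPDElinear}-\eqref{varphiPDElinear}, while the boundary conditions \eqref{BilinearBC} carry over verbatim as \eqref{BilinearBCAgain}. To finish, I would observe that $\bm{f}$, $\bm{\Sigma}$, $q$ are prescribed data independent of $(\widehat{\varphi},\varphi)$, so \eqref{varphihatPDElinear} is a linear forward advection-diffusion-reaction PDE in $\widehat{\varphi}$ alone and \eqref{varphiPDElinear} a linear backward advection-diffusion-reaction PDE in $\varphi$ alone; they are equation-level decoupled, all coupling residing in the bilinear endpoint conditions \eqref{BilinearBCAgain}.

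There is no genuinely hard step: the argument is substitution into the already-established Theorem \ref{Thm:ForwardBackward}. The only point deserving a line of care is the first one, namely that the proportionality constant in $\bm{gg}^{\top}\propto\bm{\sigma\sigma}^{\top}$ can be absorbed into the a priori free parameter $\lambda$, so that the single identity $\lambda\bm{gg}^{\top}=\bm{\Sigma}$ exhausts every case covered by the proportionality hypothesis.
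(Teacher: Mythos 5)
Your proposal is correct and follows essentially the same route as the paper: the paper's own proof simply notes that $\bm{f}_{\varphi}$ and $q_{\varphi}$ in \eqref{def_fphi}--\eqref{def_qphi} vanish when $\lambda\bm{gg}^{\top}-\bm{\Sigma}=\bm{0}$, whence \eqref{BoundaryCoupledForwardBackward} collapses to \eqref{BoundaryCoupledLinearForwardBackward}. Your additional remark that the proportionality constant can be absorbed into the free parameter $\lambda$ of the Hopf--Cole transform \eqref{HopfCole} is a correct and worthwhile clarification of the ``or equivalently'' in the statement, but it does not change the argument.
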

\begin{proof}
From \eqref{def_fphi}-\eqref{def_qphi}, both $\bm{f}_{\varphi},q_{\varphi}$ vanish for $\lambda\bm{gg}^{\top}- \bm{\Sigma}=\bm{0}$. Hence the result.
\end{proof}

The integral version of \eqref{BoundaryCoupledLinearForwardBackward} is a system of coupled nonlinear integral equations in $\left(\widehat{\varphi}(t_0,\cdot),\varphi(t_1,\cdot)\right)$, referred to as the \emph{Schr\"{o}dinger system}. Pioneering results showing the existence-uniqueness for the same are due to \cite{fortet1940resolution,beurling1960automorphism,jamison1974reciprocal}. 

\begin{remark}
Since the only coupling in \eqref{BoundaryCoupledLinearForwardBackward} comes from the boundary conditions, the system \eqref{BoundaryCoupledLinearForwardBackward} can be solved by performing fixed point recursions over the endpoint pair $\left(\widehat{\varphi}\left(t=t_0,\cdot\right),\varphi_{1}\left(t=t_1,\cdot\right)\right)$. Such recursions are now called dynamic Sinkhorn recursions and are known \cite{chen2016entropic,marino2020optimal} to be contractive w.r.t. Hilbert's projective metric \cite{bushell1973hilbert,lemmens2014birkhoff}, thereby guaranteeing worst-case linear convergence. Several generalizations of these results have appeared in the recent years \cite{caluya2021wasserstein,caluya2021reflected,chen2022most,teter2023contraction}.     
\end{remark}

This discussion raises an interesting possibility: whether the Sinkhorn-style computation can be adapted to solve the more general forward-backward nonlinear PDE system \eqref{BoundaryCoupledForwardBackward}. In principle, one may backward integrate \eqref{varphiPDE} with a terminal guess $\varphi(t_1,\cdot)$ from $t=t_1$ till $t=t_0$, saving the $\varphi(t,\cdot)$ for all $t\in[t_0,t_1]$. Then \eqref{varphihatPDE} can be forward integrated with the initial condition $\rho_0/\varphi(t=t_0,\cdot)$ and the data $\bm{f}_{\varphi},q_{\varphi}$ corresponding to the $\varphi(t,\cdot)$ from the backward pass, resulting in the updated terminal condition $\rho_1/\widehat{\varphi}(t=t_1,\cdot)$. Then these backward and forward passes can be repeated, as in the usual dynamic Sinkhorn. 

Apart form the new difficulty of integrating nonlinear \eqref{varphiPDE}, it is unclear if such generalized recursions may still be contractive in the projective sense. Algorithms to solve \eqref{BoundaryCoupledForwardBackward} will be explored in our future work.

\section{Concluding Remarks}\label{sec:conclusions}
We considered generic control-affine Schr\"{o}dinger bridge problems with prior drift $\bm{f}$, control coefficient $\bm{g}$ and noise (i.e., diffusion) coefficient $\bm{\sigma}$. We showed that applying the Hopf-Cole transform to the necessary conditions of optimality for such problems lead to a boundary-coupled system of \emph{nonlinear} advection-reaction-diffusion PDEs. The nonlinearity is \emph{not} due to diffusion but due to an ``effective drift"
\begin{align}
\bm{\bm{f}}+\bm{f}_{\varphi} = \bm{\bm{f}}+\left(\lambda\bm{gg}^{\top}-\bm{\Sigma}\right)\nabla_{\bm{x}}\log\varphi,
\label{EffectiveDrift}    
\end{align}
and an ``effective reaction" rate
\begin{align}
q + q_{\varphi} = q +\dfrac{1}{2}\left(\nabla_{\bm{x}}\log\varphi\right)^{\top}\left(\lambda\bm{gg}^{\top}-\bm{\Sigma}\right)\nabla_{\bm{x}}\log\varphi,
\label{EffectiveReactionRate}    
\end{align}
where $q$ is the original state cost-to-go. For the same control and noise channels, the nonlinear terms $\bm{f}_{\varphi},q_{\varphi}$ become zero. In that special case, numerical solution via dynamic Sinkhorn recursions are well-known with contraction guarantees. Consistent with Schr\"{o}dinger's original motivation, the mathematical parallels with quantum mechanics remain intriguing: the relation \eqref{OptimalPDFfromFactors} is the stochastic analogue of Born's relation \cite{born1926quantenmechanik}. The excess drift $\bm{f}_{\varphi}$ in \eqref{def_fphi} appears as the stochastic analogue of the ``current velocity" in the de Broglie–Bohm interpretation \cite{bohm1952suggested}. Our developments here showed that the excess reaction rate $q_{\varphi}$ in \eqref{def_qphi} is a squared-magnitude of this current velocity w.r.t. a sign indefinite weight $\lambda\bm{gg}^{\top}-\bm{\Sigma}$, quantifying the input and noise channel mismatch. 


\section*{Acknowledgment} 
During this work, the last author was supported in part through a Faculty Scholar program at the Physics Division in Lawrence Livermore National Laboratory (LLNL). The support and insightful discussions with Michael Schneider, Jane Pratt, Alexx Perloff, Maria Demireva and Conor Artman at LLNL are gratefully acknowledged.

\balance

\section*{References}

\bibliographystyle{IEEEtran}
\bibliography{References.bib}

\end{document}